\tikzstyle arrowstyle=[scale=1]
\tikzstyle directed=[postaction={decorate,decoration={markings,
    mark=at position .65 with {\arrow[arrowstyle]{stealth}}}}]
\tikzstyle reverse directed=[postaction={decorate,decoration={markings,
    mark=at position .65 with {\arrowreversed[arrowstyle]{stealth};}}}]
\providecommand{\U}[1]{\protect\rule{.1in}{.1in}}
\newtheorem{theorem}{Theorem}
\newtheorem{prop}[theorem]{Proposition}
\newcommand{\z}{{\bf z}}
\newcommand{\Z}{{\bf Z}}
\newcommand{\suchthat}{\;\ifnum\currentgrouptype=16 \middle\fi|\;}
\begin{document}
  \begin{center}
        {\fontsize{18}{22}\selectfont
       \bf Regularization of the restricted $(n+1)$--body problem on curved spaces}
       \end{center}

\vspace{4mm}

        \begin{center}
        {\bf Ernesto P\'erez-Chavela$^1$, and Juan Manuel S\'anchez-Cerritos$^2$}\\
\bigskip
$^1$Departamento de Matem\'aticas\\
Instituto Tecnol\'ogico Aut\'onomo de M\'exico, Mexico City, Mexico\\
\bigskip
$^2$Department of Mathematics\\
Sichuan University, Chengdu, People's Republic of China\\
\bigskip
ernesto.perez@itam.mx, sanchezj01@gmail.com
       \end{center}

        

        \abstract{ We consider $(n+1)$ bodies moving under their mutual gravitational attraction in spaces with constant Gaussian curvature $\kappa$. In this system, $n$ primary bodies with equal masses form a relative equilibrium solution with a regular polygon configuration, and the remaining body of negligible mass does not affect the motion of the others. We show that the singularity due to binary collision between the negligible mass and the primaries can be regularized local and globally through suitable changes of coordinates (Levi-Civita and Birkhoff type transformations). }
        
        \
        
 {\bf Keywords:} Curved $n$-body problem, Regularization
        
\section{Introduction}

We consider the generalization of the gravitational $n-$body problem to spaces of constant curvature proposed by Diacu, P\'erez-Chavela and Santoprete \cite{Diacu,Diacu2}. The problem has its roots on the ideas about non-Euclidean geometries proposed by Lovachevski and Bolyai in the 19th century \cite{Bolyai,Lovachevski}. For more details about the history of this fascinating problem we refer the interested readers to \cite{Diacu4}.

The restricted $(n+1)-$body problem  on curved spaces refers to the study of the motion of a particle $q$, with negligible mass, moving under the gravitational attraction of $n$ other particles, called primaries. The mass of the particle $q$ is very small, in such a way that the motion of the primaries are not affected by  this particle. In a personal notification, Carles Sim\'o pointed us that through a suitable change of coordinates follow by a rescaling of time, we can focus our analysis on the cases of curvature $\kappa = 1$ and  $\kappa = -1$ (see \cite{Diacu4} for details). In this way, 
we consider the  sphere embedded in $R^3$ with radius $1$, denoted by $\mathbb{S}^2$,  as model for positive curvature, and the  sphere (with Lorentzian metric) of imaginary radius $i$, denoted by 
$\mathbb{H}^2$, as model for negative curvature.

In the classical case (zero curvature), the restricted three-body problem was firstly proposed by Euler in the 18th century. One of the main obstacles to study the restricted problems is the presence of singularities due to collision. We start with a fix configuration for the primaries, avoiding the collision among them, but the problem of collision of the massless particle with one or more of the primaries still persists.
This problem was tacked first  by Levi-Civita \cite{Levi} and some years later by Birkhoff \cite{ Birkhoff}. The first technique is useful to regularize just one singularity, that we call local regularization. There are several generalizations of this technique which are applied to the restricted three body problem, see for instance \cite{Roman}, Levi-Civita regularization and its generalizations 
is very helpful for analyze the dynamics of a satellite in an orbit close to another massive body or in general in spatial missions to explore a single planet. However, sometimes it is necessary to have a global picture of the solutions, for instance to study escapes of particles or a possible connection 
among the equilibrium points, it is necessary to have a global regularization of all singularities due to collision, in this last case we use Birkhoff technique. Both techniques consist of suitable change of coordinates follow for a reparametrization of the time.

Some years ago, this classical problem (zero curvature) problem was extended considering more bodies , and the problem of regularize the binary collisions where also tackled \cite{ Vidal2}. Recently the restricted three body problem was also proposed considering the motion on curved spaces, for the restricted $2$--body problem,  where the two primaries  of different masses move on different planes around the $z$-axis \cite{Vidal3}.

In this paper we consider the restricted $(n+1)$--body problem on $\mathbb{S}^2$ and $\mathbb{H}^2$ where $n$ particles with equal masses are moving on a plane parallel to the $xy$-plane forming a regular polygon configuration. We find the regularization transformations  that allows avoid the singularities due to collisions between these particles and the remaining body of negligible mass. It results that the transformations are similar as in the classical Newtonian problem. We believe that these problems can have important applications in the dynamics of the components of an atom and in the new astronomical discoveries, usually studied only through quantum mechanics and relativity, nevertheless we let the analysis of the possible applications for a forthcoming paper and we will concentrate here just in the theoretical aspects of the problem.

After the introduction the paper is organized as follows. In section 2, we show the existence of polygonal relative equilibria given by the primary $n$ bodies with equal masses. In section 3 we set the problem and present the equations in a convenient form (a suitable rotating frame). In section 4 we present the main results of the work, in this way it is necessary to use the stereography projection to avoid the constraints and allow us to work with complex variables. Then we obtain the local and global regularization of the binary collisions.

\section{Relative Equilibria}

We consider the motion of the primary $n$ bodies where each particle, denoted by $q_i$, moves on the space $\mathbb{S}^2$ or $\mathbb{H}^2$. In this section we will show the existence of solutions given by relative equilibria.

The motion of the particles is lead by the cotangent potential

\[ U(q)=\sum_{i<j}m_im_j cotn(d(q_i,q_j)), \]
where $d$ is the geodesic distance on the corresponding space $\mathbb{S}^2$ or $\mathbb{H}^2$, and $cotn$ means the usual cotangent function or the hyperbolic cotangent function respectively.

The kinetic energy is defined as

\[ T=\dfrac{1}{2}\sum_{i}m_i\dot{q}_i \odot\dot{q}_i, \]
where the symbol $\odot$ represents the usual inner product if we consider $\mathbb{S}^2$, or the Lorentzian inner product if  $\mathbb{H}^2$ is considered (in this case for, $a,b \in \mathbb{R}^3$ we have 
$a \odot b = a_xb_x + a_yb_y - a_zb_z$).

From the Euler-Lagrange equations, the equations of motion take the form 

\begin{equation}\label{systemS2}
\ddot{q}_i=\sum_{j=1,j\neq i}^n\dfrac{q_j-\sigma(q_i\odot q_j)q_i}{[\sigma-\sigma(q_i\odot q_j)^2]^{3/2}}-\sigma(\dot{q}_i\odot \dot{q}_i)q_i, \ \ \ i=1,\cdots, n,
\end{equation}
where $\sigma$ stands for $1$ if we analyze $\mathbb{S}^2$ or $-1$ for $\mathbb{H}^2$.

\subsection{Relative Equilibria on $\mathbb{S}^2$}

Consider the group of isometries $SO(3)$ acting 
on $\mathbb{R}^3$. It is well known that it consists of all  uniform rotations. The relative equilibria are invariant solutions of the equations of motion under the group $SO(3)$. Now, since the principal axis theorem states that any $A \in SO(3)$ can be written, in some orthonormal  basis, as a rotation about a fixed axis, we consider this one as the $z$ axis. Hence we can characterize this result as follows

\begin{prop}
A solution $q_i, i=1,\cdots,n$ of the equations of motion on $\mathbb{S}^2$ is a relative equilibrium if and only if $q_i=(x_i,y_i,z_i)$, with $x_i=r_i  \cos(\Omega t + \alpha_i),$ $y_i=r_i  \sin(\Omega t + \alpha_i),$ and $z_i= \sqrt{1-r_i^2}$, where $\Omega, \alpha_i$ and $r_i, i=1,\cdots,n$ are constants.
\end{prop}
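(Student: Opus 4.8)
The plan is to characterize relative equilibria by exploiting the $SO(3)$-invariance together with the principal axis theorem, exactly as the discussion preceding the statement suggests. First I would fix the rotation axis to be the $z$-axis, so a one-parameter subgroup of $SO(3)$ acts as the block-diagonal matrix $R(\Omega t)$ that rotates the $xy$-plane by angle $\Omega t$ and fixes the $z$-coordinate. By definition, $q_i(t)$ is a relative equilibrium if $q_i(t) = R(\Omega t) q_i(0)$ for all $i$ and some fixed $\Omega$. Writing $q_i(0) = (a_i, b_i, c_i)$ and using $a_i^2 + b_i^2 = r_i^2$, $c_i = \sqrt{1 - r_i^2}$ (forced by the constraint $q_i \odot q_i = 1$ on $\mathbb{S}^2$, assuming WLOG the northern-hemisphere sign), we can set $a_i = r_i\cos\alpha_i$, $b_i = r_i\sin\alpha_i$, and then $R(\Omega t) q_i(0)$ has exactly the claimed form. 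This proves the ``only if'' direction, modulo the remark that one must also argue the axis can be taken to be $z$ in a \emph{common} orthonormal basis for all bodies simultaneously — but that is immediate since the same $A \in SO(3)$ acts on every body, so a single change of basis diagonalizing its rotation axis works for the whole configuration.

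For the ``if'' direction I would substitute $q_i = (r_i\cos(\Omega t + \alpha_i), r_i\sin(\Omega t+\alpha_i), \sqrt{1-r_i^2})$ directly into the equations of motion \eqref{systemS2} and check that the ansatz is consistent, i.e.\ that the left- and right-hand sides match identically in $t$ for appropriate $\Omega$. The key computations are: $\dot q_i = \Omega(-r_i\sin(\Omega t+\alpha_i), r_i\cos(\Omega t+\alpha_i), 0)$, so $\dot q_i \odot \dot q_i = \Omega^2 r_i^2$ is constant; $\ddot q_i = -\Omega^2(r_i\cos(\Omega t+\alpha_i), r_i\sin(\Omega t+\alpha_i), 0)$; and the inner products $q_i \odot q_j = r_ir_j\cos(\alpha_i - \alpha_j) + \sqrt{1-r_i^2}\sqrt{1-r_j^2}$ are time-independent, so the entire right-hand side of \eqref{systemS2} is a fixed linear combination of the vectors $q_j$ and $q_i$, which rotates rigidly with the configuration. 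One then observes that both sides are vectors lying in the plane spanned by $\{q_i, \dot q_i/\Omega\}$ translated appropriately, and that equating them imposes algebraic relations on the $r_i, \alpha_i, \Omega$ (the relative equilibrium conditions) but no further time-dependence; hence any solution of that algebraic system gives a genuine relative equilibrium, and conversely every relative equilibrium solves it.

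The main obstacle is the ``if'' direction's bookkeeping: one must verify that the $z$-components of \eqref{systemS2} are automatically satisfied (they reduce to the constraint that $q_i$ stays on $\mathbb{S}^2$, which is where the $-\sigma(\dot q_i \odot \dot q_i) q_i$ term is essential), and that the rotating-frame reduction does not secretly require $\Omega$ to depend on $i$ — it cannot, since all bodies are carried by the \emph{same} element of $SO(3)$. I expect the cleanest writeup passes to a uniformly rotating frame $q_i = R(\Omega t)\tilde q_i$, under which a relative equilibrium becomes a fixed point $\dot{\tilde q}_i \equiv 0$; the equivalence with the stated coordinate form is then just the explicit description of orbits of the one-parameter group, and the only real content is checking that the fixed-point equations are consistent with the constraint manifold, which the principal axis theorem guarantees by letting us assume the axis is $z$ from the outset. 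A remark at the end should note that $r_i = 0$ (a body at the pole) and $\Omega = 0$ (a fixed configuration) are included as degenerate cases, and that on $\mathbb{H}^2$ the analogous statement would use the corresponding isometry subgroup, but that is deferred to the next subsection.
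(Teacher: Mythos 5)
The paper omits this proof as a ``straightforward computation,'' and your sketch supplies it in the natural way --- principal axis theorem to put the rotation axis on $z$, explicit description of the group orbit for the ``only if'' direction, and equivariance of both sides of (\ref{systemS2}) for the converse --- so it is correct and consistent with the paper's (implicit) approach. Two minor remarks: the ``if'' direction needs none of the bookkeeping you describe, since being a solution is already hypothesized and the stated coordinate form is by itself an orbit of a uniform rotation about the $z$-axis, hence a relative equilibrium by definition; and the positive sign in $z_i=\sqrt{1-r_i^2}$ cannot be arranged body-by-body by a ``WLOG'' (a change of frame flips all bodies at once), but that imprecision sits in the statement itself rather than in your argument.
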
 

\begin{proof}
The result follows directly by straightforward computations, we omit the details here.
\end{proof}

Now we will show that, if in the above proposition, $z_i=z_j, \,\, \forall i,j$,  then it is possible to find relative equilibria.

\begin{theorem}
For $n$ equal masses on $\mathbb{S}^2$ with a regular polygon initial configuration with the bodies at a height $z=$constant $\neq 0$, there exist a positive and a negative value for the initial velocity such that the solution is a relative equilibrium.
\end{theorem}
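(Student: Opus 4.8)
\emph{Proof proposal.} The plan is to feed the relative-equilibrium ansatz of the previous proposition, specialized so that all bodies share the common height $z$, into the equations of motion \eqref{systemS2} with $\sigma=1$, and then to read off the value(s) of the angular velocity that actually produce a solution. First I would write $q_i=(r\cos\theta_i,\,r\sin\theta_i,\,z)$ with $\theta_i=\Omega t+\alpha_i$, $\alpha_i=2\pi i/n$, and $r=\sqrt{1-z^2}$; since the configuration is an honest polygon we have $0<|z|<1$, so $r>0$. A direct computation gives $\dot q_i\odot\dot q_i=\Omega^2 r^2$, $\ddot q_i=-\Omega^2\bigl(q_i-(0,0,z)\bigr)$, and $q_i\odot q_j=c_{ij}:=r^2\cos(\alpha_i-\alpha_j)+z^2$, which is constant in $t$ and satisfies $-1<c_{ij}<1$ for $i\ne j$ (no collisions, no antipodal pairs). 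Substituting and using $r^2-1=-z^2$, equation \eqref{systemS2} collapses to the single vector identity
$$\sum_{j\ne i}\frac{q_j-c_{ij}q_i}{(1-c_{ij}^2)^{3/2}}=-\Omega^2 z^2\,q_i+\Omega^2(0,0,z).$$

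Next I would split this into its $z$-component and its $xy$-component. The elementary identity $1-c_{ij}=r^2\bigl(1-\cos(\alpha_i-\alpha_j)\bigr)$ reduces the $z$-component to $z\sum_{j\ne i}\bigl[(1-c_{ij})^{1/2}(1+c_{ij})^{3/2}\bigr]^{-1}=\Omega^2 z r^2$. For the $xy$-component I would pass to complex notation, writing the planar part of $q_i$ as $re^{i\theta_i}$; the regular-polygon symmetry (the coefficients $c_{ij}$ depend only on $\cos(\alpha_i-\alpha_j)$, and the index pairing $j\leftrightarrow$ its mirror image about $i$ conjugates $e^{i(\alpha_j-\alpha_i)}$) forces the left-hand sum to be a \emph{real} multiple of $re^{i\theta_i}$, and the same identity shows that this real multiple matches the scalar produced by the $z$-equation. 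Both components therefore reduce to the single condition
$$\Omega^2=\frac{1}{r^2}\sum_{j\ne i}\frac{1}{(1-c_{ij})^{1/2}(1+c_{ij})^{3/2}}.$$

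Finally, the right-hand side is a sum of strictly positive terms, hence a positive number, and by the cyclic symmetry of the regular polygon it is independent of $i$. Calling it $\omega^2>0$, each of the two choices $\Omega=\pm\omega$ turns the ansatz into an exact solution of \eqref{systemS2} with the prescribed initial configuration $q_i(0)=(r\cos\alpha_i,r\sin\alpha_i,z)$ and initial velocity $\dot q_i(0)=\pm\omega(-r\sin\alpha_i,\,r\cos\alpha_i,\,0)$; this solution is a uniform rotation about the $z$-axis, i.e. a relative equilibrium, which gives the asserted pair of admissible initial velocities of opposite sign. The only genuine work is the bookkeeping in the $xy$-component — verifying that the polygon symmetry annihilates the imaginary part and that the resulting real scalar agrees with the one from the $z$-component — and the identity $1-c_{ij}=r^2\bigl(1-\cos(\alpha_i-\alpha_j)\bigr)$ is the key that makes both reductions land on the same formula.
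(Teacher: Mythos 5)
Your proposal is correct, and at bottom it runs on the same engine as the paper's proof: substitute the uniform-rotation ansatz into \eqref{systemS2}, reduce to a scalar condition of the form $\Omega^2=(\text{positive quantity})$, and conclude that the two square roots $\pm\omega$ give the two admissible initial velocities. Where you differ is in the bookkeeping, and your decomposition is arguably cleaner: the paper works only with the $x_i$-equation, enumerates the indices explicitly, and has to treat $n$ odd and $n$ even separately (producing the two formulas \eqref{w1} and \eqref{w2}, the even case carrying the extra antipodal term $\tfrac{1}{4r^3(1-r^2)^{3/2}}$), while leaving the $\vartheta$-(i.e.\ $z$-)component to an unstated symmetry argument. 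You instead keep the full vector equation, extract the condition from the $z$-component, and use the identity $1-c_{ij}=r^2\bigl(1-\cos(\alpha_i-\alpha_j)\bigr)$ together with the mirror pairing $j\leftrightarrow$ reflected index to show the planar part is a real multiple of $re^{i\theta_i}$ matching the same scalar; this yields the single parity-free formula $\Omega^2=\tfrac{1}{r^2}\sum_{j\neq i}\bigl[(1-c_{ij})^{1/2}(1+c_{ij})^{3/2}\bigr]^{-1}$, which one checks (by the same identity, and evaluating the $\cos=-1$ term when $n$ is even) is exactly equivalent to \eqref{w1} and \eqref{w2}. What your route buys is the unified formula and an explicit verification of the $z$-equation; what the paper's route buys is the explicit closed forms \eqref{w1}--\eqref{w2} that are then quoted verbatim in the definition of the restricted problem. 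The only part of your write-up left as an assertion is the cancellation of the imaginary part and the matching of the real scalar in the $xy$-component, but both follow in one line from $\cos(\alpha_j-\alpha_i)-c_{ij}=-\tfrac{z^2}{r^2}(1-c_{ij})$, so the gap is cosmetic.
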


\begin{proof}
Consider $n$ particles with equal masses $m=1$ with a regular polygon initial configuration.

The position for the $i-$th body at a given time $t$ is $q_i(t)=(x_i(t),y_i(t),z(t))$ where

\[x_i(t)=r \cos\left[ \Omega t +(i-1)\dfrac{2 \pi}{n}\right], \ \ \ y_i(t)=r \sin\left[ \Omega t +(i-1)\dfrac{2 \pi}{n}\right], \ \ z(t) \neq 0.\]

We will show that there exist values of $\Omega$ such that the above functions satisfy (\ref{systemS2}).

By symmetry we can consider only the equations of motion for the $x_i(t)$ coordinates.

We have

\begin{equation}
\begin{split}
 x_{i+k}=&r \cos\left[ \Omega t +(i+k-1)\dfrac{2 \pi}{n}\right],\\
 x_{i-k}=&r \cos\left[ \Omega t +(i-k-1)\dfrac{2 \pi}{n}\right],\\
 \dot{x}_i(t)=&-\Omega r\sin\left[ \Omega t +(i-1)\dfrac{2 \pi}{n}\right],\\
 \ddot{x}_i(t)=&-\Omega^2 r\cos\left[ \Omega t +(i-1)\dfrac{2 \pi}{n}\right].
 \end{split}
 \end{equation}

Let $A=\Omega t+(i-1)\dfrac{2\pi}{n}$. We have for $n$ odd

\begin{equation}
\begin{split}
 \ddot{x}_i=&\sum_{j=1,j\neq i}^n\dfrac{x_j-(q_i\cdot q_j)x_i}{[1-(q_i\cdot q_j)^2]^{3/2}}-(\dot{q}_i\cdot \dot{q}_i)x_i.
 \end{split}
\end{equation}
For each $i$ we enumerate the particles as $(-\frac{n+1}{2}+i+1\cdots,i-1,i,i+1,\cdots, \frac{n+1}{2}+i-1)$. Hence
\begin{equation}
\begin{split}
 \ddot{x}_i
 =&\sum_{j=i+1}^{\frac{n+1}{2}+i-1}\dfrac{x_j-(q_i\cdot q_j)x_i}{[1-(q_i\cdot q_j)^2]^{3/2}}+\sum_{j=i-1}^{-\frac{n+1}{2}+i+1}\dfrac{x_j-(q_i\cdot q_j)x_i}{[1-(q_i\cdot q_j)^2]^{3/2}}-(\dot{q}_i\cdot \dot{q}_i)x_i\\	
 =&\sum_{j=1}^{\frac{n+1}{2}-1}\dfrac{x_{j+i}-(q_i\cdot q_{j+i})x_i}{[1-(q_i\cdot q_{j+i})^2]^{3/2}}+\sum_{j=-1}^{-\frac{n+1}{2}+1}\dfrac{x_{j+i}-(q_i\cdot q_{j+i})x_i}{[1-(q_i\cdot q_{j+i})^2]^{3/2}}-(\dot{q}_i\cdot \dot{q}_i)x_i\\
 =&\sum_{j=1}^{\frac{n+1}{2}-1}\dfrac{x_{j+i}-(q_i\cdot q_{j+i})x_i}{[1-(q_i\cdot q_{j+i})^2]^{3/2}}+\sum_{j=1}^{\frac{n+1}{2}-1}\dfrac{x_{i-j}-(q_i\cdot q_{i-j})x_i}{[1-(q_i\cdot q_{i-j})^2]^{3/2}}-(\dot{q}_i\cdot \dot{q}_i)x_i.\\
 \end{split}
\end{equation}

Notice that $\ddot{x}_i=-\Omega^2 x_i$, \  $x_{i\pm j}=x_i\cos(2j\pi /n)\mp y_i\sin(2j\pi /n)$, \  $q_i\cdot q_{i-j}=q_i\cdot q_{i+j}=r^2 \cos(2j\pi/n)+1-r^2$ \  and \  $\dot{q}_i\cdot \dot{q}_i=r^2 \Omega^2$. With these facts we obtain

\begin{equation}\label{w1}
\Omega^2=2\sum_{j=1}^{\frac{n+1}{2}-1}\dfrac{1-\cos(2j\pi/n)}{[1-(r^2 \cos(2j\pi/n)+1-r^2)^2]^{3/2}}.
\end{equation}

For $n$ even we have

\begin{equation}
\begin{split}
 \ddot{x}_i=&\sum_{j=1,j\neq i}^n\dfrac{x_j-(q_i\cdot q_j)x_i}{[1-(q_i\cdot q_j)^2]^{3/2}}-(\dot{q}_i\cdot \dot{q}_i)x_i.
 \end{split}
\end{equation}
For each $i$ we enumerate the particles as $(-\frac{n}{2}+i+1\cdots,i-1,i,i+1,\cdots, \frac{n}{2}+i-1)$. Hence
\begin{equation}
\begin{split}
 \ddot{x}_i=&\sum_{j=i+1}^{\frac{n}{2}+i-1}\dfrac{x_j-(q_i\cdot q_j)x_i}{[1-(q_i\cdot q_j)^2]^{3/2}}+\sum_{j=i-1}^{-\frac{n}{2}+i+1}\dfrac{x_j-(q_i\cdot q_j)x_i}{[1-(q_i\cdot q_j)^2]^{3/2}}\\
 & +\dfrac{x_{\frac{n}{2}+i+1}-(q_i\cdot q_{\frac{n}{2}+i+1})x_i}{[1-(q_i\cdot q_{\frac{n}{2}+i+1})^2]^{3/2}}-(\dot{q}_i\cdot \dot{q}_i)x_i\\	
 =&\sum_{j=1}^{\frac{n}{2}-1}\dfrac{x_{j+i}-(q_i\cdot q_{j+i})x_i}{[1-(q_i\cdot q_{j+i})^2]^{3/2}}+\sum_{j=1}^{\frac{n}{2}-1}\dfrac{x_{i-j}-(q_i\cdot q_{i-j})x_i}{[1-(q_i\cdot q_{i-j})^2]^{3/2}}-(\dot{q}_i\cdot \dot{q}_i)x_i\\
&+ \dfrac{-x_{i}-(q_i\cdot q_{\frac{n}{2}+i+1})x_i}{[1-(q_i\cdot q_{\frac{n}{2}+i+1})^2]^{3/2}}-(\dot{q}_i\cdot \dot{q}_i)x_i.
\end{split}
\end{equation}

And we have

\begin{equation}\label{w2}
\Omega^2=2\sum_{j=1}^{\frac{n}{2}-1}\dfrac{1-\cos(2j\pi/n)}{[1-(r^2 \cos(2j\pi/n)+1-r^2)^2]^{3/2}}+\dfrac{1}{4r^3(1-r^2)^{3/2}}.
\end{equation}

The equations (\ref{w1}) and (\ref{w2}) are positive, hence we conclude that there exist a positive and a negative value of $\Omega$ such that generate relative equilibria.

\end{proof}

If the particles are on the equator of $\mathbb{S}^2$, then we have the following \cite{paper1}:  if $n$ is odd, then and they have equal masses forming a regular polygon configuration moving with constant angular velocity $\Omega$, then the positions and velocities form a solution of relative equilibrium for any $\Omega \in \mathbb{R}$.

\subsection{Relative Equilibria on $\mathbb{H}^2$}

Consider the orthogonal transformations of determinant $\pm 1$ that leave $\mathbb{H}^2$ invariant, this is a closed group called \textit{Lorentz group}, $Lor(\mathbb{R}^{2,1},\odot)$. The principal axis theorem in this case states that every $G \in Lor(\mathbb{R}^{2,1},\odot)$ has one of the following canonical forms:

\[A=P  \left( \begin{array}{ccc}
\cos \theta & -\sin \theta & 0 \\
\sin \theta & \cos \theta & 0 \\
0 & 0 & 1 \end{array} \right) P^{-1},\]

\[B=P  \left( \begin{array}{ccc}
1 & 0 & 0 \\
0 & \cosh s & \sinh s \\
0 & \sinh s & \cosh s \end{array} \right) P^{-1},\]
or

\[C= P  \left( \begin{array}{ccc}
1 & -t & t \\
t & 1-\frac{t^2}{2} & \frac{t^2}{2} \\
t & -\frac{t^2}{2} & 1+\frac{t^2}{2} \end{array} \right) P^{-1},\]
where $\theta \in [0,2 \pi),s,t \in \mathbb{R}$, and $P \in Lor(\mathbb{R}^{2,1},\odot)$.

The above transformations are called elliptic, hyperbolic, and parabolic, respectively. 

It is well known from people in the field that there are not solutions generated by parabolic transformations \cite{Diacu}. Those solutions generated by elliptic or hyperbolic transformations are called elliptic or hyperbolic relative equilibria. 

In this work we are interested in solutions where the $n$ primaries form relative equilibria with a regular polygon configuration. In a recent paper of the authors, they proved the no existence of these kind of hyperbolic relative equilibria, in particular the no existence of Lagrange hyperbolic relative equilibria \cite{paperh2}. Hence we will focus only on elliptic relative equilibria. 
Analogously as in $\mathbb{S}^2$ we have the following result.

\begin{theorem}
For $n$ equal masses on $\mathbb{H}^2$ with a regular polygon initial configuration with the bodies at a height $z=$constant $\neq 0$, there exist a positive and a negative value for the initial velocity such that the solution is a relative equilibrium.
\end{theorem}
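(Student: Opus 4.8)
The plan is to repeat, \emph{mutatis mutandis}, the argument used for the $\mathbb{S}^2$ case of the previous theorem, now taking $\sigma=-1$ and the Lorentzian product $\odot$ throughout. Normalize $m=1$ and look for a rigidly rotating regular polygon
\[
q_i(t)=\bigl(r\cos A_i,\ r\sin A_i,\ z\bigr),\qquad A_i=\Omega t+(i-1)\tfrac{2\pi}{n},
\]
with $z$ constant. The constraint $q_i\odot q_i=-1$ forces $z=\sqrt{1+r^2}$ (or its negative), so the height is automatically a nonzero constant and the ansatz describes an elliptic motion, which is the only regime left open by the discussion preceding the statement. Any $r>0$ keeps the configuration collision-free, since for $k\not\equiv 0\ (\mathrm{mod}\ n)$ one has $q_i\odot q_{i+k}=-\bigl(1+r^2(1-\cos(2k\pi/n))\bigr)<-1$.

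First I would record the elementary identities obtained by substituting the ansatz into (\ref{systemS2}): $\ddot{x}_i=-\Omega^2 x_i$, $x_{i\pm k}=x_i\cos(2k\pi/n)\mp y_i\sin(2k\pi/n)$, $\dot q_i\odot\dot q_i=\Omega^2 r^2$, and $q_i\odot q_{i\pm k}=r^2\cos(2k\pi/n)-1-r^2$. As in the spherical case, the symmetry of the configuration reduces the verification to the $x$-component of (\ref{systemS2}), and pairing the index $j=i+k$ with $j=i-k$ cancels the $y_i$-terms, so the right-hand side collapses to a constant multiple of $x_i(t)$; since $x_i(t)\not\equiv 0$, matching that constant with $-\Omega^2$ yields a single scalar equation for $\Omega^2$ (the $y$- and $z$-components then hold automatically, exactly as on $\mathbb{S}^2$).

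Carrying out this computation, for $n$ odd I expect
\[
\Omega^2=2\sum_{k=1}^{\frac{n-1}{2}}\frac{1-\cos(2k\pi/n)}{\bigl[(r^2\cos(2k\pi/n)-1-r^2)^2-1\bigr]^{3/2}},
\]
and for $n$ even the same sum over $k=1,\dots,\tfrac{n}{2}-1$ together with the antipodal contribution $\dfrac{1}{4r^3(1+r^2)^{3/2}}$ coming from the body $j=i+\tfrac{n}{2}$. In both cases every summand is positive: the numerators satisfy $1-\cos(2k\pi/n)>0$, and since $q_i\odot q_{i+k}<-1$ the quantities under the $3/2$ powers are strictly positive, so the denominators are real and positive. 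Hence the right-hand side is a well-defined positive number, and choosing $\Omega=\pm\sqrt{(\mathrm{RHS})}$ produces the asserted positive and negative angular velocities.

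The argument carries no real conceptual difficulty; the two places that demand care are the sign bookkeeping forced by $\sigma=-1$ (the potential, the numerator $q_j+(q_i\odot q_j)q_i$, and the kinetic reaction term all change sign relative to $\mathbb{S}^2$, yet the net effect is precisely to produce the positive expressions above), and the check that the bracket $[(q_i\odot q_j)^2-1]^{3/2}$ never degenerates, which is exactly the geometric fact that distinct points of $\mathbb{H}^2$ satisfy $q_i\odot q_j\le-1$ with equality only at coincidence. I would also remark that, in contrast to bodies on the equator of $\mathbb{S}^2$, there is here no value of $r$ that trivializes the relation between $\Omega$ and $r$, so a genuine constraint between the two persists for every admissible configuration.
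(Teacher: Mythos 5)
Your proposal is correct and follows essentially the same route as the paper: repeat the $\mathbb{S}^2$ computation with $\sigma=-1$ and the Lorentzian product, using $q_i\odot q_{i\pm j}=r^2\cos(2\pi j/n)-1-r^2$, and your resulting expressions for $\Omega^2$ (including the antipodal term $\tfrac{1}{4r^3(1+r^2)^{3/2}}$ for $n$ even) coincide with the paper's equations (\ref{w3}) and (\ref{w4}), whose positivity yields the two admissible angular velocities. Your added remarks on $z=\sqrt{1+r^2}$ and on $q_i\odot q_j\le -1$ guaranteeing nondegenerate denominators are consistent with, and slightly more explicit than, the paper's terse argument.
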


\begin{proof}
The proof is similar by noticing that $q_i \odot q_{i-j}=q_i \odot q_{j-i}=r^2\cos(\frac{2\pi j}{n})-1-r^2$. 

For $n$ odd the angular velocity satisfies

\begin{equation}\label{w3}
\Omega^2=2\sum_{j=1}^{\frac{n+1}{2}-1}\dfrac{1-\cos(2j\pi/n)}{[-1+(r^2 \cos(2j\pi/n)-1-r^2)^2]^{3/2}}.
\end{equation}

For $n$ even,

\begin{equation}\label{w4}
\Omega^2=2\sum_{j=1}^{\frac{n}{2}-1}\dfrac{1-\cos(2j\pi/n)}{[-1+(r^2 \cos(2j\pi/n)-1-r^2)^2]^{3/2}}+\dfrac{1}{4r^3(1+r^2)^{3/2}}.
\end{equation}

Since equations (\ref{w3}) and (\ref{w4}) are positive we conclude that there exists a positive and negative value for the angular velocity that leads to elliptic relative equilibria.

\end{proof}

\section{The restricted curved $(n+1)$--body problem}

In order to unified our analysis we introduce the notation $\mathbb{M}^2$ without distinction for 
$\mathbb{S}^2$ or $\mathbb{H}^2$. The restricted curved $(n+1)-$body problem refers to the study of a system of $n+1$ particles moving on under their mutual attraction  on  $\mathbb{M}^2$, where $n$ bodies with positions $q_i$ of equal masses (called primary bodies) are rotating on a circle parallel to the $xy$ plane with  velocity (\ref{w1}) or (\ref{w2}) and with a regular polygon configuration. The remaining body located at position $q$ has a negligible mass and its motion is given by the following equation

\begin{equation}
\ddot{q}=\sum_{i=1}^n\dfrac{q_i-\sigma(q_i\odot q)q}{[\sigma-\sigma(q\odot q_i)^2]^{3/2}}-\sigma(\dot{q} \odot \dot{q})q.
\end{equation}

As in the classical case, we introduce rotating coordinates. Let $q=RQ$ with $Q=(\xi,\eta,\vartheta)^T$, where $R$ is the rotation matrix 

\[R= \left( \begin{array}{ccc}
\cos \Omega & -\sin \Omega & 0 \\
\sin \Omega & \cos \Omega & 0 \\
0 & 0 & 1 \end{array} \right).\]

After a straightforward computation the new equation of motion in the new variables are

\begin{equation}\label{eqn}
\begin{split}
 \ddot{Q}-2\Omega J \dot{Q}+ [\sigma(\dot{\xi}-\Omega \eta)^2+\sigma(\dot{\eta}+\Omega \xi)^2+\dot{\vartheta}^2]Q =\nabla_{Q}\left(\dfrac{\Omega^2}{2}(\xi^2+\eta^2)\right. \\
 \left. +\sum_{i=1}^n\dfrac{Q_i\odot Q}{\left(\sigma-\sigma(Q_i \odot Q)^2\right)^{1/2}}\right),
 \end{split}
\end{equation}

with

\[J= \left( \begin{array}{ccc}
0 & 1 & 0 \\
-1& 0 & 0 \\
0 & 0 & 0 \end{array} \right).  \]

In these coordinates the position of the primaries take the form

\[ Q_i=\left[r\cos \left(\frac{2 \pi}{n}(i-1)\right),r\sin \left(\frac{2 \pi}{n}(i-1)\right),z\right]. \]

\bigskip

\textbf{Stereographic projection }

In our analysis we consider the stereographic projection from the point $(0,0,-1)$ to $\mathbb{R}^2$, 
\ $\Pi: \mathbb{M}^2  \rightarrow \mathbb{R}^2$. This function maps $Q\longmapsto (u,v)$, with

\[ u=\dfrac{\xi}{1+\vartheta}, \ \ v=\dfrac{\nu}{1+\vartheta}. \]

The inverse function $\Pi^{-1}$ maps $(u,v) \longmapsto Q$ where

\[ \xi=\dfrac{2u}{1+\sigma(u^2+v^2)}, \ \ \eta=\dfrac{2u}{1+\sigma(u^2+v^2)}, \ \ \vartheta=\dfrac{1-\sigma(u^2+v^2)}{1+\sigma(u^2+v^2)}. \]

It is known that $\Pi$ maps $\mathbb{S}^2$ onto the whole plane $\mathbb{R}^2$, with the metric $ds^2=\dfrac{4}{1+u^2+v^2}$, this plane with this metric is known by some authors as the curved plane. The case of $\mathbb{H}^2$, this space is projected onto the open unitary disk with the metric $ds^2=\dfrac{4}{1-u^2-v^2}$, this space is the well known model of hyperbolic geometry called the Poincar\'e disk.

Under $\Pi$, the primaries, originally on $\mathbb{M}^2$, now are locate  at $w_i=\dfrac{1}{1+z}(k_i,h_i)$,  with $k_i=r\cos \left(\frac{2 \pi}{n}(i-1)\right)$ and $h_i=r\sin \left(\frac{2 \pi}{n}(i-1)\right)$.

In the right part of (\ref{eqn}), the so called effective potential, can be written as

\begin{equation}
\begin{split}
 &\dfrac{\Omega^2}{2}(\xi^2+\eta^2)+\sum_{i=1}^n\dfrac{Q_i\odot Q}{\left(\sigma-\sigma(Q_i \odot Q)^2\right)^{1/2}}\\
 =& \dfrac{2 \Omega^2(u^2+v^2)}{(1+\sigma(u^2+v^2))}+\sum_{i=1}^n\dfrac{2k_iu+2h_iv+\sigma z(1-\sigma(u^2+v^2))}{[\sigma(1+\sigma(u^2+v^2))^2-\sigma(2k_iu+2h_iv+\sigma z(1-\sigma(u^2+v^2)))^2]^{1/2}}\\
 =:&\Psi(u,v)+U(u,v).
 \end{split}
\end{equation}

\section{Regularization}

We first write the problem as a Hamiltonian system, with Hamiltonian function given by

\begin{equation}\label{ham}
 H(u,v,p_u,p_v)=\dfrac{(1+\sigma(u^2+v^2))^2}{8}(p_u^2+p_v^2)+\Omega(vp_u-up_v)-U(u,v),
\end{equation}
where $U$ is the corresponding potential getting from the stereographic projection of $\mathbb{S}^2$ or $\mathbb{H}^2$ onto $\mathbb{R}^2$. If no confusion arises, we will keep denoting the positions of the primaries on $\mathbb{C}$ as $w_i$.

In order to analyze the regularization of the binary collisions between the negligible mass with the primaries, we consider  complex variables through the following change of coordinates

\[ \z=u+iv, \ \ \Z=p_u+ip_v.\]

Then (\ref{ham}) takes the form

\begin{equation} \label{H}
H=\dfrac{(1+\sigma|\z|^2)2}{4}|\Z|+2\Omega Im(\z\overline{\Z})-2V(\z,\overline{\z}), 
\end{equation}
with

\[ V(\z,\bar{\z})=\sum_{j=1}^n\dfrac{k_j(\z+\overline{\z})-ih_j(\z-\overline{\z})+\sigma z(1-\sigma |\z|^2)}{r|\z-w_j||\z-\widehat{w}_j|},\]

Where $\hat{w}_i=-\dfrac{1}{1 -z}(k_i,h_i)$. 

If we consider $\mathbb{S}^2$ , then $\Pi^{-1}(\hat{w}_i)$  corresponds to the antipodal point of the primary $Q_i$, however if we consider  $\mathbb{H}^2$, then $\hat{w}_i$ is a point such that $\Pi^{-1}(\hat{w}_i)$ does not belong to $\mathbb{H}^2$.

\subsection{Local Regularization}

In this section we state the first main theorem of this paper.

\begin{theorem}
The transformation $\z=g(w)=w\overline{w}+w_k$ , $(i=1,\cdots n)$ with the time transformation $\dfrac{dt}{ds}=|w|^2$, regularize the singularity of (\ref{H}) due collision between the negligible mass and the $k-$th primary body.
\end{theorem}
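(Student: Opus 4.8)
The plan is to reproduce, in the curved setting, the classical Levi--Civita scheme: fix an energy level, carry out the holomorphic square--root change of variable together with the indicated rescaling of time, and verify that the resulting Hamiltonian extends real-analytically across the collision. I take the Levi--Civita map to be $\z=g(w)=w^{2}+w_{k}$, so that $|\z-w_{k}|=|w|^{2}$, matching the time change $dt/ds=|w|^{2}$.

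First I would fix the value $h$ of the Hamiltonian $(\ref{H})$ and extend the point map $\z=w^{2}+w_{k}$ to a canonical transformation of the complex phase space by its cotangent lift $\Z=W/(2\overline w)$; a direct computation shows that the real symplectic form is preserved for $w\neq 0$, so the change of variables is symplectic on a punctured neighbourhood of the collision. Then I would pass to the new independent variable $s$ via $dt/ds=|w|^{2}$ and introduce
\[ K \;=\; |w|^{2}\,(H-h). \]
By the Poincar\'e time-reparametrization lemma, on the hypersurface $\{K=0\}=\{H=h\}$ the flow of $K$ in the time $s$ is a reparametrization of the flow of $H$ in the time $t$; hence it suffices to prove that $K$ extends to a real-analytic function in a full neighbourhood of $w=0$.

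The heart of the proof is a term-by-term simplification of $|w|^{2}H$ after the substitution, using $|w|^{2}/w=\overline w$, $|w|^{2}/\overline w=w$ and $|w|^{2}=|\z-w_{k}|$. The kinetic term yields $|w|^{2}\cdot\frac{(1+\sigma|\z|^{2})^{2}}{4}\cdot\frac{|W|^{2}}{4|w|^{2}}=\frac{(1+\sigma|w^{2}+w_{k}|^{2})^{2}}{16}|W|^{2}$, a polynomial in $(w,\overline w,W,\overline W)$. The rotating-frame term $2\Omega\operatorname{Im}(\z\overline\Z)=2\Omega\operatorname{Im}\!\big(\z\,\overline W/(2w)\big)$ becomes, after multiplication by $|w|^{2}$, $\Omega\operatorname{Im}\!\big((w^{2}\overline w+w_{k}\overline w)\overline W\big)$, again a polynomial --- the a priori singular piece $w_{k}/w$ being absorbed by $|w|^{2}/w=\overline w$. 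In $-2|w|^{2}V$, the $j=k$ summand collapses to
\[ -\,\frac{2\big(k_{k}(\z+\overline\z)-ih_{k}(\z-\overline\z)+\sigma z(1-\sigma|\z|^{2})\big)}{r\,|\z-\widehat{w}_{k}|}\,,\qquad \z=w^{2}+w_{k}, \]
which is analytic near $w=0$ because $w_{k}\neq\widehat{w}_{k}$ (as recorded above, $\Pi^{-1}(\widehat{w}_{k})$ is the antipode of $Q_{k}$ on $\mathbb{S}^{2}$, resp.\ does not lie on $\mathbb{H}^{2}$); each summand with $j\neq k$ carries the extra factor $|w|^{2}$ and remains analytic since $w_{k}\notin\{w_{j},\widehat{w}_{j}\}$ (the primaries form a genuine regular $n$-gon at height $z\neq 0$, while the $\widehat{w}_{j}$ lie on the circle at height $-z$, resp.\ outside the Poincar\'e disk). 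Finally $-h|w|^{2}$ is a polynomial. Summing the four contributions shows that $K$ is real-analytic across $w=0$; its Hamiltonian flow is then well defined and analytic there, which is exactly the assertion that the binary collision of the negligible mass with the $k$-th primary is regularized. As a byproduct, on $\{K=0\}$ the locus $w=0$ forces $|W|^{2}$ to equal a strictly positive constant, so collision orbits reach $w=0$ transversally and pass through it.

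The main obstacle is the analyticity check above, and within it the rotating-frame term: prior to simplification it contains $\z/w=(w^{2}+w_{k})/w$, whose constant part blows up like $|w|^{-1}$, and one must confirm that multiplying by $|w|^{2}$ together with the identity $|w|^{2}/w=\overline w$ cancels every negative power of $w$ and $\overline w$. A secondary care point is to exclude a hidden collision at $w=0$ with another primary $w_{j}$ or with an auxiliary point $\widehat{w}_{j}$; this is where the regular-polygon configuration of the primaries and the explicit form of the $\widehat{w}_{j}$ are used.
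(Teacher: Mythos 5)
Your proof is correct and follows essentially the same route as the paper: fix an energy level, pass to the rescaled Hamiltonian $\hat H=|g'(w)|^2\left(H+\tfrac{C}{2}\right)$ (your $K=|w|^2(H-h)$, justified by the Poincar\'e reparametrization argument), lift the point transformation by $\Z=W/\overline{g'(w)}$, and check that the factor $|w|^2$ cancels the singular denominator $|\z-w_k|$ in the $j=k$ term of $V$ while all remaining terms stay real-analytic at $w=0$. The one deviation is the map itself: you use the holomorphic Levi--Civita transformation $\z=w^2+w_k$ with $\Z=W/(2\overline{w})$, whereas the statement and the paper's proof literally take $\z=w\overline{w}+w_k$ with ``$g'(w)=\overline{w}$''; since $|w^2|=|w\overline{w}|=|w|^2$, every magnitude computation coincides, and your choice is in fact the sounder reading --- $w\mapsto w\overline{w}+w_k$ sends the whole $w$-plane onto a ray through $w_k$ and so is not a genuine change of variables, so the paper's formula is best interpreted as the standard $w^2+w_k$, exactly as you do. You are also more thorough than the paper: you verify explicitly that the Coriolis term $2\Omega\operatorname{Im}(\z\overline{\Z})$ becomes polynomial after multiplication by $|w|^2$, and that $w_k\notin\{w_j,\widehat{w}_j\}$ for the nonsingular terms, points the paper leaves unchecked (it only treats the potential term). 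One caveat on your closing aside: the claim that $K=0$ and $w=0$ force $|W|^2$ to equal a strictly positive constant depends on the sign of the numerator $k_k(\z+\overline{\z})-ih_k(\z-\overline{\z})+\sigma z(1-\sigma|\z|^2)$ at $\z=w_k$; with the paper's conventions this quantity is positive on $\mathbb{S}^2$ but comes out negative on $\mathbb{H}^2$, so that transversality remark needs the sign conventions in $V$ sorted out --- it does not, however, affect the regularization argument itself.
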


\begin{proof}

First, let us consider the space transformation $\z=g(w)$ with $\Z=W/\overline{g'(w)}$, and the new time $s$ such that $\dfrac{dt}{ds}=|g'(w)|^2.$

Take a constant energy level $H=-\dfrac{C}{2}$, and let us define a new Hamiltonian $\hat{H}=|g'(w)|^2\left(H+\dfrac{C}{2}\right)$. The flow generated by both of the Hamiltonian functions is equivalent, hence we will consider the flow generated by $\hat{H}$ at zero energy level.


Performing the change or variables  we obtain that the Hamiltonian takes the form

\begin{eqnarray}\label{H2}
\hat{H} &=& \dfrac{(1+\sigma |g(w)|^2)2}{4}|W| \ + \ 2|g'(w)|^2\Omega Im\left(g(w)\overline{W}/g'(w)\right) \nonumber \\
&&  - \ 2|g'(w)|^2V(w,\overline{w}) + |g'(w)|^2 \dfrac{C}{2}.
\end{eqnarray}

Take $z=g(w)=w\overline{w}+w_k$, then $g'(w)=\overline{w}$ and $|g'(w)|^2=|\overline{w}|^2=|w|^2=w\overline{w}=|w\overline{w}|$. We will check that this transformation avoids the singularity due collision of the negligible mass and the primary $k$.

On $\mathbb{M}^2$:
{\footnotesize
\begin{equation}
\begin{split}
|g'(w)|^2V(w,\overline{w})=&|w|^2\left[\sum_{j=1}^n\dfrac{\left(  k_j(g(w)+\overline{g(w)})-ih_j(g(w)-\overline{g(w)})+\sigma z(1-\sigma|g(w)|^2)\right)}{r|w\overline{w}+w_k-w_j||w\overline{w}+w_k-\widehat{w}_j|}\right]\\
=&|w|^2\left[\sum_{j=1,j\neq k}^n\dfrac{\left(  k_j(g(w)+\overline{g(w)})-ih_j(g(w)-\overline{g(w)})+\sigma z(1-\sigma|g(w)|^2)\right)}{r|w\overline{w}+w_k-w_j||w\overline{w}+w_k-\widehat{w}_j|}\right]\\
&+|w|^2\dfrac{\left(  k_j(g(w)+\overline{g(w)})-ih_j(g(w)-\overline{g(w)})+\sigma z(1-\sigma|g(w)|^2)\right)}{r|w\overline{w}||w^2+w_i-\widehat{w}_j|}\\
=&|w|^2\left[\sum_{j=1,j\neq k}^n\dfrac{\left(  k_j(g(w)+\overline{g(w)})-ih_j(g(w)-\overline{g(w)})+\sigma z(1-\sigma|g(w)|^2)\right)}{r|w\overline{w}+w_k-w_j||w\overline{w}+w_k-\widehat{w}_j|}\right]\\
&+\dfrac{\left(  k_j(g(w)+\overline{g(w)})-ih_j(g(w)-\overline{g(w)})+\sigma z(1-\sigma|g(w)|^2)\right)}{r|w^2+w_i-\widehat{w}_j|}.
\end{split}
\end{equation}
}

We can see that the singularities of equation (\ref{H2}) are avoided, hence we conclude the proof.

\end{proof}

\subsection{Global Regularization}

The second main statement of this work is the following.

\begin{theorem}
The transformation $\z=g(w)=\dfrac{n-1}{n}w+\dfrac{w_1^n}{nw^{n-1}}$ and the time transformation $\dfrac{dt}{ds}=\dfrac{(n-1)^2}{n^2}\dfrac{|\z-w_1|^2|\z-w_2|^2\cdots |\z-w_n|^2}{|\z|^{2n}}$ regularize the $n$ binary collision-singularities of (\ref{H}), between the negligible mass and each of the primaries.
\end{theorem}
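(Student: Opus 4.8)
The plan is to run, with this Birkhoff-type map, the same scheme as in the proof of the previous theorem. I perform the canonical change of variables $\z=g(w)$, $\Z=W/\overline{g'(w)}$, fix an energy level $H=-\frac{C}{2}$, set $\hat H=|g'(w)|^2\!\left(H+\frac{C}{2}\right)$ and reparametrize time by $\frac{dt}{ds}=|g'(w)|^2$; on $\{\hat H=0\}$ the flow of $\hat H$ is a time reparametrization of the flow of (\ref{H}) on $\{H=-\frac{C}{2}\}$, so it suffices to show $\hat H$ is analytic across each collision locus. Exactly as for (\ref{H2}), after substitution
\[
\hat H = \frac{(1+\sigma|g(w)|^2)^2}{4}|W|^2 + 2\Omega\, Im\!\left(\overline{g'(w)}\,g(w)\,\overline{W}\right) - 2|g'(w)|^2 V(w,\overline{w}) + \frac{C}{2}|g'(w)|^2,
\]
so the only terms that can fail to be analytic come from the potential part $|g'(w)|^2 V(w,\overline{w})$ (and from $w=0$, i.e. $\z=\infty$, the pole of the stereographic chart, which is not a collision).

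Next I would compute $g'$. From $g(w)=\frac{n-1}{n}w+\frac{w_1^n}{n}w^{-(n-1)}$ we get
\[
g'(w) = \frac{n-1}{n}\left(1-\frac{w_1^n}{w^n}\right) = \frac{n-1}{n}\cdot\frac{w^n-w_1^n}{w^n}.
\]
Since the primaries are the vertices of a regular $n$-gon, $w_j=w_1 e^{2\pi i(j-1)/n}$, so $w_1,\dots,w_n$ are precisely the $n$-th roots of $w_1^n$ and $w^n-w_1^n=\prod_{j=1}^n(w-w_j)$. Hence
\[
\frac{dt}{ds} = |g'(w)|^2 = \frac{(n-1)^2}{n^2}\cdot\frac{|w-w_1|^2|w-w_2|^2\cdots|w-w_n|^2}{|w|^{2n}},
\]
which is the time transformation of the statement. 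I then record the two facts that make the map work: $g'(w_j)=0$ (a simple zero), and, using $w_j^n=w_1^n$, $g(w_j)=\frac{n-1}{n}w_j+\frac{w_j}{n}=w_j$. Thus $g$ fixes every primary with a critical point of ramification two there; equivalently $g(w)-w_j=\frac{n-1}{n}\,(w-w_j)^2\,q_j(w)\,w^{-(n-1)}$ with $q_j(w_j)\neq 0$.

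The cancellation is then immediate. Write $V(w,\overline{w})=\sum_{j=1}^n \frac{N_j(w,\overline{w})}{r\,|g(w)-w_j|\,|g(w)-\widehat{w}_j|}$, where $N_j$ and $|g(w)-\widehat{w}_j|$ are analytic and non-vanishing near $w=w_j$ (the point $\widehat{w}_j$ never coincides with a primary, since $z\neq 0$). Near $w=w_j$ the only possibly singular contribution to $|g'(w)|^2 V$ is the $j$-th summand, in which $|g'(w)|^2$ and $|g(w)-w_j|$ both vanish to order two, so the quotient is bounded and analytic; the summands with $\ell\neq j$ are analytic at $w_j$ because $g(w_j)=w_j\neq w_\ell$; and the remaining pieces of $\hat H$ — the kinetic term (whose $|g'(w)|^2$ has already cancelled), the Coriolis term $2\Omega\, Im(\overline{g'(w)}g(w)\overline{W})$, and $\frac{C}{2}|g'(w)|^2$ — are analytic at $w_j$. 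Hence $\hat H$ extends analytically across each of the $n$ collision loci.

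The genuinely delicate point — and the essential difference with the local statement — is the word \emph{global}: a single map must desingularize all $n$ collisions at once, so controlling $\hat H$ at the critical points $w_1,\dots,w_n$ is not enough; one must control it over the whole set $g^{-1}(\{w_1,\dots,w_n\})$. Since $g$ has degree $n$, each value $w_j$ has further preimages (the remaining roots of $(n-1)w^n-nw_j w^{n-1}+w_1^n=0$) at which $g'$ does not vanish, and there the corresponding summand of $|g'(w)|^2V$ is not bounded. The argument therefore has to show that all such extra preimages, together with $w=0$, lie outside the physical domain (the open unit disk in the $\mathbb{H}^2$ model, and the center of projection $\z=\infty$ in the $\mathbb{S}^2$ model), so that inside the domain the only points lying over the collision set are the critical points $w_j$ already handled. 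This domain analysis — not the algebraic cancellation at the $w_j$, which is routine once $g'(w_j)=0$ and $g(w_j)=w_j$ are in hand — is where I expect the main work to lie.
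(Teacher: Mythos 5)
Your argument coincides with the paper's own proof: the same canonical extension $\Z=W/\overline{g'(w)}$, the same fixed-energy Hamiltonian $\hat H=|g'(w)|^2\left(H+\tfrac{C}{2}\right)$, the facts $g(w_j)=w_j$ and $g'(w_j)=0$, the factorization $g(w)-w_j=(w-w_j)^2G(w)/w^{n-1}$ with $G(w_j)\neq 0$, and the cancellation of $(w-w_j)^2$ in $|g'(w)|^2V(w,\overline{w})$; your direct computation $g'(w)=\tfrac{n-1}{n}\,(w^n-w_1^n)/w^n=\tfrac{n-1}{n}\prod_{j}(w-w_j)/w^n$ is in fact tidier than the paper's verification through the elementary symmetric functions of the $w_j$. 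Be aware, however, that the ``delicate point'' you defer at the end --- the $n-2$ non-critical preimages of each $w_j$ under the degree-$n$ map and the restriction to the physical domain --- is not treated in the paper at all: its proof ends with the cancellation at the ramification points and the remark that the only remaining singularity is $w=0$, corresponding to $|\z|\to\infty$. So your proposal already contains everything the paper actually proves, and the domain analysis you anticipate would be a refinement beyond the published argument (relevant only for $n\geq 3$, since for $n=2$ no extra preimages occur).
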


\begin{proof}
Consider the transformation $\z=g(w):=\alpha w +\dfrac{\beta}{w^{n-1}}$, $\Z=W/\overline{g'(w)}$, and the time $s$ such that $\dfrac{dt}{ds}=|g'(w)|^2.$

As we did before, we will consider a fixed energy level $-\dfrac{C}{2}$ and a new Hamiltonian defined as $\hat{H}=|g'(w)|^2\left(H+\dfrac{C}{2}\right)$. We will be working using this new Hamiltonian at zero energy evel.

Performing the change or variables  we obtain that the Hamiltonian takes the form

\begin{eqnarray}\label{H1}
\hat{H} &=& \dfrac{(1+\sigma|g(w)|^2)2}{4}|W| \ + \ 2|g'(w)|^2\Omega Im\left(g(w)\overline{W}/g'(w)\right) \nonumber \\
 && -  2|g'(w)|^2V(w,\overline{w}) + |g'(w)|^2 \dfrac{C}{2}.
\end{eqnarray}

In order to remove singularities we will find $\alpha$ and $\beta$ with the following properties: First that the primaries remain fixed, it means

$$\text{On} \ \ S^2: \ \ \ g(w_i)=w_i, \ \ g(\hat{w}_i)=\hat{w}_i; \ \ \  \text{On} \ \ H^2: g(w_i)=w_i,$$

and second that the functions $g(w)$ allows us to remove all the collision singularities, it means $g'(w_i)=0$, or
 
 $g'(w)=\alpha+\dfrac{\beta (n-1)}{w^n}=\dfrac{\alpha}{w^n}\left( w^n-\dfrac{\beta}{\alpha} (n-1) \right)=\dfrac{\alpha}{w^n}(w-w_1)(w-w_2)\cdots (w-w_n)$.
 
 Last equality is satisfied if the following occurs
 
 \begin{equation}
 \begin{split}
 0=&\sum_{j=1}^nw_j,\\
 0=&w_1\sum_{j=2}^nw_j+w_2\sum_{j=3}^nw_j+\cdots +w_{n-2}\sum_{j=n-1}^nw_j+w_{n-1}w_n,\\
 0=&w_1w_2\sum_{j=3}^nw_j+w_1w_3\sum_{j=4}^nw_j+ \cdots  +w_2w_3\sum_{j=4}^nw_j +\cdots +w_2w_4\sum_{j=5}^nw_j+\\
&+w_{n-3}w_{n-2}\sum_{j=n-1}^nw_j +w_{n-2}w_{n-1}w_{n}=0,\\
\vdots\\
0=&\sum_{i=1}^n \prod_{j=1, j\neq i}^nw_j,\\
(-1)^n&\prod_{j=1}^nw_j=-\dfrac{\beta}{\alpha}(n-1).
 \end{split}
 \end{equation}
 
 The first $(n-1)$ conditions are satisfied if the first one occurs, and it is true since it is the sum of the $n$th roots of the unity.
 
 Since $w_{j+1}=e^{i2\pi j/n}w_1$, we have $\dfrac{w_1^n}{n-1}=\dfrac{\beta}{\alpha}$. We also have that the property $g(w_1)=w_1$ implies $\dfrac{\beta}{\alpha}=\dfrac{w_1^n}{n-1}$. With these facts we conclude $\alpha=\dfrac{n-1}{n}$ and $\beta=\dfrac{w_1^n}{n}$.

Notice that

\[ g(w)-w_i=\dfrac{n-1}{n}w+\dfrac{w_1^n}{nw^{n-1}}-w_i=\dfrac{(w-w_i)^2}{w^{n-1}}G(w), \]
where

\[ G(w)=\sum_{k=0}^{n-2}\dfrac{n-k-1}{n}w^{n-2-k}w_i^{k}. \]

And we have that $G(w_i)\neq 0$, for $i=1,\cdots, n$.

We now check  that the singularities of (\ref{H1}) due collisions are removed.

{\footnotesize
\begin{equation}
\begin{split}
|g'(w)|^2V(w,,\overline{w})=&\left[\sum_{j=1}^n\dfrac{\left(  k_j(g(w)+\overline{g(w)})-ih_j(g(w)-\overline{g(w)})+\sigma z(1-\sigma|g(w)|^2)\right)}{r|g(w)-w_j||g(w)-\widehat{w}_j|}\right] \cdot\\
&\left[ \dfrac{(n-1)^2}{n^2w^{2n}}(w-w_1)^2\cdots (w-w_n)^2  \right]\\
=&\left[\sum_{j=1}^n\dfrac{w^{n-1}\left(  k_j(g(w)+\overline{g(w)})-ih_j(g(w)-\overline{g(w)})+\sigma z(1-\sigma|g(w)|^2)\right)}{r(w-w_j)^2G(w)|g(w)-\widehat{w}_j|}\right] \cdot\\
&\left[ \dfrac{(n-1)^2}{n^2w^{2n}}(w-w_1)^2\cdots (w-w_n)^2  \right]\\
=&\sum_{j=1}^n\left[ \dfrac{\left(  k_j(g(w)+\overline{g(w)})-ih_j(g(w)-\overline{g(w)})+\sigma z(1-\sigma|g(w)|^2)\right)}{rG(w)|g(w)-\widehat{w}_j|}\right. \cdot\\
&\left. \dfrac{(n-1)^2}{n^2w^{n+1}}(w-w_1)^2\cdots (w-w_{j-1})^2(w-w_{j+1})^2\cdots (w-w_n)^2  \right].
\end{split}
\end{equation}
}

Notice that the only singularity is the point $w=0$ which corresponds to $|\z|\rightarrow \infty$. With this we finish the proof.

\end{proof}

\subsection*{Acknowledgements} The first author has been partially supported by {\it Asociaci\'on Mexicana de Cultura A.C.}  The second author was supported by {\it The 2017's Plan of Foreign Cultural and Educational Experts Recruitment for the Universities Under the Direct Supervision of the Ministry of Education of China} (Grant no. WQ2017SCDX045).

\end{document}